\newtheorem{Th}{Theorem}[section]
\newtheorem{Prop}[Th]{Proposition}
\newtheorem{Lm}[Th]{Lemma}
\newtheorem*{mainthm}{Main Theorem}{\bf}{\it}
\def\be{\begin{eqnarray}} \def\ee{\end{eqnarray}} \def\bes{\begin{eqnarray*}}
\def\ees{\end{eqnarray*}} \def\bit{\begin{itemize}} \def\eit{\end{itemize}}
\def\ba{\begin{array}{ll}} \def\ea{\end{array}}
\def\mn{\mathbb{N}} 
\def\mz{\mathbb{Z}}
\def\mc{\mathbb{C}}  
\def\mr{\mathbb{R}}
\newtheorem{Def}[Th]{Definition}
   \def\mn{\mathbb{N}}
 \def\ge{\geqslant}  
\def\d{\operatorname{dist}}  
\def\diam{\operatorname{diam}}
\numberwithin{equation}{section}
\begin{document}
\title{Almost every real quadratic polynomial has a poly-time computable Julia set.}
\author{Artem Dudko and Michael Yampolsky}

\abovedisplayskip 8pt \abovedisplayskip 8pt \belowdisplayskip 9pt

\maketitle

\begin{abstract}
	We prove that Collet-Eckmann rational maps have poly-time computable Julia sets. As a consequence, almost all real quadratic Julia sets are poly-time.
\end{abstract}


\section{Introduction.}

A chaotic dynamical system can have a simple mathematical description, and thus be easy to implement on a computer. And yet, numerical simulation of its orbits is often impractical, since
small computational errors are magnified very rapidly. The modern paradigm of the numerical study of chaos can be summarized as follows:
while the simulation of an individual orbit for an extended
period of time does not make a practical sense, one should study the limit set of a typical orbit. Perhaps, the best known illustration of this approach is the study of Julia sets of rational maps which are repellers of the dynamics, that is, limit sets of typical backward orbits. Julia sets may be the most drawn objects in mathematics, and the study of the theoretical aspects of computing them is important both for practicing Complex Dynamicists, and as a ``simple'' test case of the paradigm.

This paper is motivated by the following general question, which we address in the context of quadratic Julia sets:

\medskip
\noindent
    {\bf Question 1.} {\sl Can the attractor/repeller of a typical dynamical system be efficiently simulated on a computer? }

    \medskip
    \noindent
To be more specific, let us first recall that
a compact set $K$ in the plane is computable if there exists an algorithm to draw it on
a computer screen with an arbitrarily high resolution. Any computer-generated picture is a finite collection of pixels.
If we fix a specific pixel size (commonly taken to be $2^{-n}$ for some $n$) then to accurately draw the set within
one pixel size, we should fill in the pixels which are close to the set (for instance, within distance $2^{-n}$ from it),
and leave blank the pixels which are far from it (for instance, at least $2^{-(n-1)}$-far). Thus, for the set $K$ to be
computable, there has to exist an algorithm which for every square of size $2^{-n}$ with dyadic rational vertices
correctly decides whether it should be filled in or not according to the above criteria.
We say that a computable set has a polynomial time complexity (is {\it poly-time})
if there is an algorithm which does this in a time bounded by a polynomial function of the precision parameter $n$, independent
of the choice of a pixel. We typically view poly-time computable sets as the ones which can be simulated efficiently in practice, indeed, in known applications,
this is generally the case.

When we talk of computability of the Julia sets of a rational map $R$,
the algorithm drawing it is supposed to have access to the values of the coefficients of the map (again with an arbitrarily
high precision). Using estabilished terminology such an algorithm can {\it query an oracle} for the coefficients of $R$; naturally, reading each additional binary
digit of a coefficient takes a single tick of the computer clock.

Computability of Julia sets has been explored in depth by M.~Braverman and the second author
(see monograph \cite{BY08} and references therein) and turns out to be a very non-trivial problem.
They have shown that even in the quadratic family $f_c(z)=z^2+c$ there exist values of $c$ such that the corresponding Julia sets $J_c$ are not computable.
Moreover, such a value of $c$ can be computed explicitly, and even, modulo a broadly accepted conjecture in one-dimensional dynamics, in polynomial time.

The phenomenon of non-computability is quite rare, and ``most'' quadratic Julia sets are computable. However, even a computable Julia set could have such a high computational complexity as to render any practical simulations impossible. Indeed,
in \cite{BBY06} it was shown that there exist
computable quadratic Julia sets with an arbitrarily high time complexity.

Restricted to the class of quadratic Julia sets, our first question transforms into:

\medskip
\noindent
{\bf Question 2.} {\sl Is it true that for almost every $c\in\mc$ the Julia set $J_c$ is poly-time?}

\medskip
\noindent
Poly-time computability has been previously established for several types of quadratic Julia sets.
Firstly, all hyperbolic Julia sets are
poly-time \cite{Bra04,Ret}. This theoretical result corresponds to a known efficient practical algorithm for such sets, developed by J.~Milnor \cite{Mil1} and known as {\it Milnor's  Distance Estimator}.

The requirement of hyperbolicity may be weakened significantly. Braverman \cite{Bra06} showed that parabolic quadratics also have poly-time computable Julia sets; and presented an efficient practical refinement of Distance Estimator for parabolics.
The first author showed in \cite{Dudko-14}
that maps with non-recurrent critical orbits have poly-time Julia sets.
Finally, in our previous joint work \cite{DudYam}, we have shown that Feigenbaum Julia set is poly-time. The last example is particularly interesting, since in contrast with the other ones mentioned above,  its proof does not use any weak hyperbolicity properties of the map itself, but rather a computational scheme based on self-similarity (Feigenbaum universality) properties of the Julia set.

Our main result gives a positive answer for Question 2 in the case of real parameters $c$:

\begin{mainthm}
For almost every real value of the parameter $c$, the Julia set $J_c$ is poly-time.
\end{mainthm}

Our principal technical result, which implies Main Theorem, establishes poly-time computability for Collet-Eckmann Julia sets (see the definitions below). Conjecturally, Collet-Eckmann parameters together with hyperbolic parameters form a set of full measure in $\mc$. This very strong conjecture would imply a positive answer to Question 2 as well (an even stronger form of this conjecture can be made about the parameter spaces of rational maps of degree $d$, $d\geq 2$, with similar consequences). However, at present, this is only known for $c\in\mr$.

Below we briefly recall the principal definitions of Computability Theory and Complex Dynamics, and then proceed with the proofs.

\subsection{Preliminaries on computability}
In this section we give a very brief review of computability and complexity of sets.
For details we refer the reader to the monograph \cite{BY08}.
 The notion of computability relies on the concept of a Turing Machine (TM) \cite{Tur},
 which is a commonly accepted way of formalizing
the definition of an algorithm.
The computational power of a Turing Machine is provably equivalent to that of a computer program running on a RAM
computer with an unlimited memory. We will use the terms ``TM'' and ``algorithm'' interchangeably.

\begin{Def}\label{comp fun def}
A function $f:\mn\rightarrow \mn$ is called computable, if there exists
a TM which takes $x$ as an input and outputs $f(x)$.
\end{Def}
Note that Definition \ref{comp fun def} can be naturally extended to
functions on arbitrary countable sets, using a convenient
identification with $\mathbb{N}$.


Let us denote $\mathcal D$ the set of {dyadic rationals}, that is, the set of rational numbers of the form $a/2^b$ where
$a\in\mz$ and $b\in\mn$.

To define computability of functions of real or complex variable we need to introduce the concept of an oracle:
\begin{Def}
A function $\phi:\mn\to\mathcal D+i\mathcal D$ is an oracle for $c\in\mc$ if for every $n\in\mn$ we have
$$|c-\phi(n)|<2^{-n} .$$
\end{Def}
A TM equipped with an oracle (or simply an {\it oracle TM}) may query the oracle by reading the value of $\phi(n)$ for an arbitrary $n$.
\begin{Def}
Let $S\subset \mc$. A function $f:S\to \mc$ is called computable if there exists an oracle TM $M^\phi$ with a single natural
input $n$ such that if $\phi$ is an oracle for $z\in S$ then $M^\phi(n)$ outputs $w\in \mathcal D +i\mathcal D$ such that
$$|w-f(z)|<2^{-n} .$$
\end{Def}
When calculating the running time of $M^\phi$, querying $\phi$ with precision $2^{-m}$ counts as
$m$ time units. In other words, it takes $m$ ticks of the clock to read the argument of $f$ with precision $m$ dyadic
digits (bits). This is, of course, in an agreement with the computing practice.

We say that a function $f$ is {\it poly-time computable} if in the above definition the algorithm $M^\phi$ can be made to run
in time bounded by a polynomial in $n$, independently of the choice of a point $z\in S$ or an oracle representing this
point.

 Let $d(\cdot,\cdot)$ stand for Euclidean distance between points or sets in $\mathbb{R}^2$.
 Recall the definition of the {\it Hausdorff distance} between two sets:
$$d_H(S,T)=\inf\{r>0:S\subset U_r(T),\;T\subset U_r(S)\},$$
where $U_r(T)$ stands for the $r$-neighborhood of $T$:
$$U_r(T)=\{z\in \mathbb{R}^2:d(z,T)\leqslant r\}.$$ We call
a set $T$ a {\it $2^{-n}$ approximation} of a bounded set $S$, if
$d_H(S,T)\leqslant 2^{-n}$. When we try to draw a $2^{-n}$ approximation $T$ of a set $S$
 using a computer program, it is convenient to
 let $T$ be a finite
 collection of disks of radius $2^{-n-2}$ centered at points of the form $(x,y)$
 with $x,y\in\mathcal D$.  We will call such a set {\it dyadic}.
A dyadic set $T$ can be described using a function
\begin{eqnarray}\label{comp fun}h_S(n,z)=\left\{\begin{array}{ll}1,&\text{if}\;\;d(z,S)\leqslant 2^{-n-2},
\\0,&\text{if}\;\;d(z,S)\geqslant 2\cdot 2^{-n-2},\\
0\;\text{or}\;1&\text{otherwise},
\end{array}\right.\end{eqnarray} where $n\in \mathbb{N}$ and $z=(i/2^{n+2},j/2^{n+2}),\;i,j\in \mathbb{Z}.$\\
 \\ Using this
function, we define computability and computational
complexity of a set in $\mathbb{R}^2$ in the following way.
\begin{Def}\label{DefComputeSet} A bounded set $S\subset
\mathbb{R}^2$ is called computable in time $t(n)$ if there
is a TM, which computes values of a function $h(n,\bullet)$
of the form (\ref{comp fun}) in time $t(n)$. We say that
$S$ is poly-time computable, if there exists a polynomial
$p(n)$, such that $S$ is computable in time $p(n)$.
\end{Def}
Computability and complexity of compact subsets of the Riemann sphere $\hat\mc$ are defined in a completely analogous fashion, substituting the Euclidean metric in the above with the standard spherical metric given by $dz/(1+|z|^2)$. Since the two metrics are equivalent on any compact subset of $\mc$, we have the following:
\begin{Prop}
\label{equiv-metric}
Let $S\Subset \mc$. Then computability of $S$ as a subset of $\mc$ in the Euclidean metric is equivalent to computability of $S$ as a subset of $\hat\mc$ in the spherical metric. Moreover, $S$ is poly-time in the former sense if and only if it is poly-time in the latter.
\end{Prop}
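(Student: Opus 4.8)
The plan is to reduce the statement to the elementary bi-Lipschitz equivalence of the two metrics on compacta (already invoked just before the Proposition), and then to observe that a bi-Lipschitz change of metric shifts the precision parameter only by an additive constant, so it preserves both computability and poly-time computability.

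First I would fix $R>0$ with $S\subset D(0,R)$ and set $K=\overline{D}(0,2R)$. On $K$ the spherical distance $d_{\mathrm{sph}}$ and the Euclidean distance are bi-Lipschitz equivalent, so I can fix a constant $c\in(0,1]$ with $c\,|z-w|\le d_{\mathrm{sph}}(z,w)\le c^{-1}|z-w|$ for all $z,w\in K$. Since $S\Subset\mc$, the compact set $\hat\mc\setminus\operatorname{Int}K$ is disjoint from $S$, so there is a fixed $\delta_0>0$ with $\d(z,S)\ge R$ and $d_{\mathrm{sph}}(z,S)\ge\delta_0$ for every $z\notin K$. Hence for all but finitely many precisions $n$ the correct value of $h_S(n,\cdot)$ at every point outside $K$ is $0$ in either metric; the finitely many exceptional pairs $(n,z)$, and likewise the finitely many net points of $\hat\mc$ near $\infty$ that occur at coarse scales, are handled by a finite lookup table built into the algorithm. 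It therefore suffices to compute $h_S$ on $K$ for $n$ large.

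Next I would carry out the translation in one direction, say from Euclidean computability to spherical computability, the other being identical with the roles of the metrics swapped. Suppose a TM computes the Euclidean function $h^{\mathrm{eucl}}_S(n,\cdot)$ in time $t(n)$. To compute $h^{\mathrm{sph}}_S(m,w)$ for a net point $w\in K$, I would choose a constant $N=N(R,c)$ and query $h^{\mathrm{eucl}}_S(m+N,\cdot)$ at the dyadic grid points of scale $2^{-m-N-2}$ lying within Euclidean distance $O(2^{-m})$ of $w$ — there are only $O(1)$ of them — and let $\widehat T$ be the union of the closed Euclidean balls of radius $2^{-m-N-2}$ around those of them that receive the value $1$. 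Only points of $S$ at distance $O(2^{-m})$ from $w$ can affect whether $d_{\mathrm{sph}}(w,S)$ lies below $2^{-m-2}$ or above $2\cdot 2^{-m-2}$, and for those points the Euclidean–spherical comparison on $K$ shows that $\widehat T$ approximates $S$ near $w$ with spherical error at most $C2^{-m-N}$ for an absolute $C$. Choosing $N$ so large that $C2^{-N}<2^{-3}$, the explicitly computable number $\rho:=d_{\mathrm{sph}}(w,\widehat T)$ satisfies $|\rho-d_{\mathrm{sph}}(w,S)|<2^{-m-3}$, and then outputting $1$ when $\rho<\tfrac32\cdot 2^{-m-2}$ and $0$ otherwise respects both thresholds. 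This algorithm makes $O(1)$ calls to the Euclidean algorithm at precision $m+N$ plus $\operatorname{poly}(m)$ arithmetic, hence runs in time $O\!\left(t(m+N)\right)+\operatorname{poly}(m)$; in particular it is poly-time whenever $t$ is. Running the same construction in the reverse direction then completes the proof.

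The one point I expect to require care is precisely this preservation of polynomial time. The naive route — use the Euclidean algorithm to print a full $2^{-n}$-picture of $S$ and then re-read it in the spherical metric — enumerates $\sim 2^{2n}$ grid cells and thus a priori loses an exponential factor, so it yields only the (easy) statement about computability, not the one about complexity. The fix above is the \emph{locality} of $h_S$: its value at a single point and precision $m$ is determined by $S$ inside an $O(2^{-m})$-neighborhood of that point, so only $O(1)$ point queries at precision $m+O(1)$ are ever needed. Everything else — the bi-Lipschitz estimate, the lookup table at coarse scales, and the bookkeeping for a net of $\hat\mc$ near $\infty$ — is entirely routine.
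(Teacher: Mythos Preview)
Your proposal is correct and is precisely the routine argument the paper has in mind: the paper does not actually prove this Proposition but simply remarks, immediately after stating it, that ``the proof is a trivial exercise and will be left to the reader,'' having already noted that the two metrics are equivalent on compact subsets of $\mc$. Your write-up fleshes out that exercise, and your care about locality (querying only $O(1)$ nearby pixels rather than redrawing the whole picture) is exactly the point one must not miss for the poly-time half of the statement.
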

The proof is a trivial exercise and will be left to the reader.

\subsection{Collet-Eckmann maps and the statement of the principal result}
We recall that for a rational map $f:\mc\to\mc$ of degree $\deg f\geq 2$ its {\it Fatou set} $F_f$ is the domain of Lyapunov stability of the dynamics of $f$. That is, $F_f$ consists of points $z\in\mc$ for which there exists a neighborhood $U\ni z$ in which the sequence of iterates
$\{f_{|U}^n\}_{n\in\mn}$ is equicontinuous with respect to the spherical metric on $\hat\mc$. The complement  of $F_f$ is the Julia set $J_f$; it is an always non-empty compact subset of $\hat\mc$ which is fully invariant under $f$, that is $f^{-1}(J_f)=J_f$. The Julia set has non-empty interior if and only if it is equal to all of $\hat\mc$.

A rational map $f$ with $\deg f\geq 2$ is called {\it hyperbolic} if there exists a smooth Riemannian metric $\mu$ on an open neighborhood of $J_f$ such that $f$ is strictly expanding with respect to the corresponding Riemannian norm:
\begin{equation}\label{hypexp}||Df(z)||_\mu>1.\end{equation}
Such maps have a particularly tractable dynamics.
As was shown by Braverman \cite{Bra04} and Rettinger \cite{Ret}, hyperbolic Julia sets are poly-time computable. Note,  that a hyperbolic Julia set cannot contain any {\it critical points} of $f$, that is, points $c$ where $f'(c)=0$. In fact, an equivalent definition of hyperbolicity is that every critical point has an orbit which converges to an attracting cycle of $f$ (all such cycles are evidently in the Fatou set). Hyperbolicity of $f$ is thus an open condition in the parameter space of rational maps of degree $d\geq 2$.

It is the main open conjecture in the field of Complex Dynamics, that hyperbolic rational maps form a dense set in the parameter space of rational maps of degree $d\geq 2$. It is well-known, however, that the set of hyperbolic parameters does not have full measure in this space for any such $d$. A particularly useful class of rational maps which exhibits a weak version of the hyperbolic expansion  property (\ref{hypexp}) is given by the Collet-Eckmann condition described below:

\begin{Def} A non-hyperbolic rational map $f$ is called Collet-Eckmann if there exist constants $C,\gamma>0$
such that the following holds:
 for any critical point $c\in J_f$ of $f$ whose forward orbit
 does not contain any critical points one has:
 \begin{eqnarray}\label{CE} \left|Df^n(f(c))\right|\geqslant Ce^{\gamma n}\;\;\text{for any}\;\;n\in \mathbb{N}.
\end{eqnarray} \end{Def} In \cite{AvilaMoreira-05} Avila and Moreira showed:

\begin{Th}
  \label{avmor}
For almost every real parameter $c$ the map $f_c(z)=z^2+c$
 is either Collet-Eckmann or hyperbolic.

\end{Th}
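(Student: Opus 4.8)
Outside the interval $[-2,1/4]$ the critical orbit of $f_c$ escapes to infinity and $f_c$ is hyperbolic, so it suffices to treat non-escaping real parameters. The plan is to split the full-measure statement into two pieces handled by very different machinery. First I would invoke Lyubich's \emph{Regular or Stochastic Dichotomy}: for Lebesgue-almost every $c\in[-2,1/4]$, either $f_c$ is \emph{regular}, meaning it has an attracting cycle and is therefore hyperbolic, or $f_c$ is \emph{stochastic}, meaning it carries a unique ergodic absolutely continuous invariant probability measure $\mu$. This step is itself a major theorem; it rests on density of hyperbolicity in the real quadratic family, the real \emph{a priori} bounds, the exponential decay of geometry of the principal nest of (finitely) renormalizable maps, renormalization rigidity, and the resulting absence of measurable invariant line fields for non-regular real quadratics. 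Two by-products of the dichotomy will be used for free: infinitely renormalizable parameters form a null set, since such maps are neither regular nor stochastic; and a stochastic $f_c$ has an a.c.i.p.\ of strictly positive Lyapunov exponent (Ledrappier), so the critical orbit enjoys good \emph{averaged} expansion. It therefore remains to show that almost every stochastic parameter satisfies the honest Collet--Eckmann bound (\ref{CE}).

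For this I would run a parameter-exclusion argument in the tradition of Jakobson and Benedicks--Carleson, but organized around the Avila--Lyubich--de~Melo \emph{phase-parameter relation}. The content of that relation is that the map assigning to a parameter $c$ the successive positions of the critical orbit relative to the dynamically defined partitions (principal nest pieces, renormalization windows) is, after passing to deep scales and suitable uniformizations, comparable with bounded distortion to the action of the dynamics itself in phase space. Using it, the set of stochastic parameters for which the critical orbit makes an atypically deep close return to $0$ at combinatorial stage $n$ --- the returns that would destroy (\ref{CE}) --- can be estimated by the measure of the corresponding deep sub-piece in phase space. By the exponential decay of geometry of the principal nest these measures are summable in $n$, and a Borel--Cantelli argument then shows that the parameters failing (\ref{CE}) form a null subset of the stochastic locus.

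The estimates produced this way are most naturally phrased as a summability condition ($\sum_n |Df_c^n(f_c(0))|^{-1}<\infty$, or a bound on the ``backward contraction'' of Avila--Moreira), so the last ingredient is the equivalence, for $S$-unimodal maps, between such conditions and the Collet--Eckmann condition (Nowicki--van~Strien, Nowicki--Przytycki, Graczyk--Smirnov, and Avila--Moreira themselves). Feeding the summability estimate through this equivalence yields (\ref{CE}) for almost every stochastic parameter, and combined with the dichotomy this gives the theorem.

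The hard part will be the phase-parameter transfer. Converting a full-measure statement about the critical orbit in \emph{phase space} into a full-measure statement in the \emph{parameter} $c$ requires quantitative control of how $f_c(0)$ moves with $c$ relative to the dynamical partitions --- a form of ``almost transversality'' of the critical-value curve --- and this is exactly where Lyubich's rigidity theory and the sharp form of the complex bounds are indispensable. A naive orbit-count along the critical trajectory gives only the phase-space statement; the passage to parameters, together with the uniformity of the distortion bounds across the infinitely many renormalization scales, is the technical core of the Avila--Moreira argument.
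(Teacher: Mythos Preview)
The paper does not prove Theorem~\ref{avmor} at all: it is quoted as a result of Avila and Moreira \cite{AvilaMoreira-05} and used as a black box, with no argument supplied. So there is nothing to compare against in the paper's own text.

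Your sketch, by contrast, is a fair high-level outline of the actual Avila--Moreira strategy: Lyubich's Regular-or-Stochastic dichotomy to reduce to the stochastic locus, the Avila--Lyubich--de~Melo phase-parameter relation to transfer phase-space estimates on close returns into parameter-space measure bounds, exponential decay of geometry of the principal nest to make those bounds summable, and the equivalence (for $S$-unimodal maps) of the resulting summability/backward-contraction conditions with Collet--Eckmann. You also correctly flag the phase-parameter transfer as the technical heart. For the purposes of this paper, however, none of this is needed: a one-line citation suffices, and attempting to reproduce the argument would be both out of scope and, at the level of a sketch, unverifiable. If you want to include anything, a sentence pointing to \cite{AvilaMoreira-05} and the background in Lyubich's dichotomy is appropriate; the detailed proof plan is not.
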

In \cite{Aspenberg-13} Aspenberg proved that the set of Collet-Eckmann parameters
has positive Lebesgue measure in the space of coefficients of all rational
maps of fixed degree $d\geqslant 2$. Moreover, there is a conjecture
that {\it almost all parameters in this space correspond to either Collet-Eckmann or hyperbolic maps}.

The following property can be viewed as a form of weak hyperbolicity for a rational mapping:
\begin{Def}\label{DefESC} A rational map $f$ satisfies Exponential Shrinking of Components (ESC) condition if there exists $\lambda<1$ and $r>0$ such that for every $n\in\mathbb N$, any $x\in J_f$ and any connected component $W$ of $f^{-n}(U_r(x))$ one has $\diam(W)<\lambda^n$.
\end{Def}

In a fundamental paper on Collet-Eckmann dynamics, Przytycki, Rivera-Letelier, and Smirnov showed \cite{PrzytyckiLetelierSmirnov-03}:
\begin{Th}
  \label{ThESC}
  Collet-Eckmann condition implies Exponential Shrinking of Components condition.
\end{Th}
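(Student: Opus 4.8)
The plan is to pull back a small round disk $U_r(x)$, $x\in J_f$, under $f^{-n}$ and to bound the diameter of a connected component $W$ of $f^{-n}(U_r(x))$ by telescoping along the associated backward orbit. The backward iterates fall into \emph{regular stretches} --- maximal runs along which the successive pullback components miss the critical set --- and \emph{critical passages} --- single steps whose pullback component engulfs a critical point. On a regular stretch the relevant iterate of $f$ is an unbranched covering, so Koebe distortion applies and exponential contraction is supplied by the Ma\~n\'e-type expansion estimate: there are $C_0>0$, $\beta>1$ and a fixed small neighbourhood $\Omega$ of $\mathrm{Crit}(f)\cap J_f$ with $|Df^k(y)|\ge C_0\beta^k$ for every orbit segment $y,f(y),\dots,f^{k-1}(y)$ avoiding $\Omega$ --- this uses that a Collet--Eckmann map has no parabolic cycles, which in any case is necessary for ESC to hold. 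On a critical passage $f$ is $\ell$-to-$1$ near the critical point, so the pullback component has diameter comparable to the $\ell$-th root of that of its image: this \emph{enlarges} diameters, but the image is a disk around the critical value, and the Collet--Eckmann inequality (\ref{CE}) will force it to be exponentially small, which is what rescues the estimate. I would first record the routine reductions: only the finitely many critical points in $J_f$ are relevant; a critical point whose forward orbit meets another critical point is one of finitely many, and is absorbed --- together with the critical passage it forces a bounded number of steps later --- into multiplicative constants.

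Next I would set up the bookkeeping with a shrinking-neighbourhoods construction (\`a la Przytycki; de Melo--van Strien in dimension one): rather than $U_r(x)$ itself one pulls back a nested geometric sequence of disks contracting to a slightly smaller one, the spare room guaranteeing that each pullback component $V_j$ (with $V_0=U_r(x)$ and $f^{\,n-j}(W)\subseteq V_j$) is simply connected and contains at most one critical point, that along each crit-point-free stretch the corresponding iterate is a conformal isomorphism onto a definite Koebe collar of its image (hence has bounded distortion), and that each critical passage is \emph{centred}: if $V_t\ni c$ then $f\colon V_t\to V_{t-1}$ is $\ell(c)$-to-$1$ branched over the critical value $f(c)$, with $f(c)$ roughly central in the roughly round disk $V_{t-1}$. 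Numbering the passage levels $0<t_1<\dots<t_m\le n$ (depth measured from $V_0$) and writing $\delta_i=\diam(V_{t_i})$, $g_i=t_{i+1}-t_i-1$, this yields two basic estimates: across the regular stretch, Koebe gives $\diam(V_{t_{i+1}-1})\asymp\delta_i/|Df^{g_i}(f(c^{(i+1)}))|$; and across the passage, the $\ell$-to-$1$ normal form gives $\delta_{i+1}\asymp\diam(V_{t_{i+1}-1})^{1/\ell(c^{(i+1)})}$, with implied constants depending only on $f$.

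The Collet--Eckmann hypothesis now enters: by construction the orbit segment in the first estimate is a crit-point-free piece of the forward critical orbit of $c^{(i+1)}$, so (\ref{CE}) gives $|Df^{g_i}(f(c^{(i+1)}))|\ge Ce^{\gamma g_i}$ (the first passage is treated the same way on the stretch $V_{t_1-1}\to V_0$; the regular stretch below the last passage, which is not a critical-orbit segment --- as in the case $m=0$ of no passages --- is handled instead by the Ma\~n\'e estimate). Feeding this in produces a recursion of the form $\delta_{i+1}\le A\,(\delta_i e^{-\gamma g_i})^{1/\ell(c^{(i+1)})}$ with $A$ uniform, and unwinding it, \emph{provided} the critical passages are sparse enough that the gaps $g_i$ can absorb the iterated $\ell$-th roots, gives $\diam(W)\le K\rho^{\,n}$ with $K<\infty$, $\rho<1$ uniform in $x$, $n$, $W$. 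Since $K$ does not depend on $x$ or on $W$, shrinking $r$ (equivalently, re-running the argument after a bounded initial number of steps) replaces $K$ by $1$ at the cost of replacing $\rho$ by, say, $\sqrt\rho$, which is exactly the ESC property of Definition~\ref{DefESC}.

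The step I expect to be the main obstacle is precisely the last one --- controlling the accumulation through the critical passages. The recursion above is an oversimplified picture: every passage costs an $\ell$-th root and a distortion/normal-form constant, and passages can \emph{cluster} (whenever the critical orbit approaches the critical set, which a Collet--Eckmann map in general permits), so one cannot iterate the recursion naively. The real content is to show, quantitatively and uniformly in $x$ and $n$, that such clusters --- equivalently, the total degree of the pullback $f^n\colon W\to U_r(x)$ --- are controlled, so that the expansion harvested by Collet--Eckmann on the intervening stretches dominates. Dovetailing the shrinking-neighbourhoods bookkeeping (the ``centred passage'' and bounded-distortion claims, with uniform constants) with the Collet--Eckmann estimates at exactly this point is the technical core of the theorem; it is carried out by Przytycki, Rivera-Letelier and Smirnov in \cite{PrzytyckiLetelierSmirnov-03}, whose telescope / shrinking-neighbourhoods scheme I would follow.
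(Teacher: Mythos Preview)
The paper does not give its own proof of this statement: Theorem~\ref{ThESC} is quoted as a result of Przytycki, Rivera-Letelier and Smirnov \cite{PrzytyckiLetelierSmirnov-03} and is used as a black box. Your proposal is a faithful (and rather detailed) sketch of the shrinking-neighbourhoods/telescope argument from that same reference, and you explicitly say you would follow their scheme; so there is no discrepancy to report --- you and the paper are appealing to the same source, you have simply unpacked more of what is inside it.
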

It is elementary to see that
ESC implies that $f$ does not have rotational domains or parabolic periodic points. Jointly with Binder and Braverman, the second author has shown (cf.\cite{BBY07,BY08}):

\begin{Th}\label{ThComp} Let $f$ be a rational map without rotation domains. Then its Julia set is computable in the spherical metric by an oracle Turing machine $\mathcal M^\phi$ with an oracle representing the coefficients of $f$. The algorithm uses non-uniform information on parabolic periodic points of $f$ (which is sufficient to lower-compute the parabolic basins).
\end{Th}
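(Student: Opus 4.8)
The plan is to compute $J_f$ by \emph{upper-} and \emph{lower-semicomputing} the distance function $z\mapsto d(z,J_f)$ and combining the two through the slack built into the definition of $h(n,z)$ in (\ref{comp fun}). To decide $h(n,z)$ it suffices either to produce an upper bound on $d(z,J_f)$ that is strictly smaller than $2^{-n-1}$ (and then output $1$), or to certify that $d(z,J_f)>2^{-n-2}$ (and then output $0$); since $2^{-n-2}<2^{-n-1}$, the actual value of $d(z,J_f)$ falls into at least one of these two cases, so running the two procedures described below in parallel and halting at the first success yields a correct value of $h(n,z)$. The first procedure uses only the oracle for the coefficients of $f$ and is responsible for the pixels meeting $J_f$; the second also consults the non-uniform data on the parabolic cycles and is responsible for the pixels lying far from $J_f$.

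\emph{Upper bounds for $d(z,J_f)$.} One first locates a repelling periodic point $\beta\in J_f$: periodic points of period $p$ are roots of $f^p(w)=w$ and can be approximated from the oracle to any precision, while the inequality $|(f^p)'(\beta)|>1$ is an open condition and hence verifiable with finite precision. Such a $\beta$ exists by the classical theorem of Fatou that $J_f$ contains infinitely many repelling cycles. Since $J_f$ is totally invariant, every point of $f^{-m}(\beta)$ lies in $J_f$; each such finite set is computable (the roots of $f^m(w)=\beta$), and $\bigcup_{m\ge1}f^{-m}(\beta)$ is dense in $J_f$ because $\beta$ is not exceptional. Therefore $a_m:=d\bigl(z,\bigcup_{j\le m}f^{-j}(\beta)\bigr)$ is a computable, non-increasing sequence of upper bounds for $d(z,J_f)$ with $a_m\downarrow d(z,J_f)$, and if $d(z,J_f)<2^{-n-1}$ then some $a_m<2^{-n-1}$ and this procedure outputs $1$.

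\emph{Certifying that $d(z,J_f)>2^{-n-2}$.} This amounts to certifying that a slightly enlarged closed disk $\overline{B}(z,\rho)$ with $\rho>2^{-n-2}$ is contained in the Fatou set $F_f$. By the classification of periodic Fatou components, and since $f$ has no rotation domains, $F_f$ is the union of the (super)attracting basins and the parabolic basins. Each (super)attracting cycle, together with an explicit forward-invariant trapping disk $D$ with $f^p(\overline{D})\subset D$, can be located and certified from the oracle; for each parabolic cycle---whose location is precisely the advice we are given---the Leau--Fatou flower theorem provides an explicit union of forward-invariant attracting petals. Because a compact subset of $F_f$ is carried into a finite union of these traps after a uniformly bounded number of iterates, the procedure can certify $\overline{B}(z,\rho)\subset F_f$ by computing, from the oracle, enclosures of the successive images $f^k(\overline{B}(z,\rho))$ and checking whether one of them lands inside a trap, dovetailing over $\rho=2^{-n-2}(1+2^{-j})$, $j\ge1$; this halts and outputs $0$ whenever $d(z,J_f)>2^{-n-2}$. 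Both hypotheses of the theorem enter at exactly this point: the advice on parabolic cycles is unavoidable because the property ``the multiplier of a periodic point is a root of unity'' cannot be semi-decided from finite-precision data, and rotation domains must be excluded because the portion of $J_f$ bounding a Siegel disk or a Herman ring need not be lower-semicomputable---this being the mechanism behind the known non-computable quadratic Julia sets.

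I expect the main obstacle to be the effective, uniform treatment of the parabolic basins in the second procedure: extracting from the Leau--Fatou picture explicit, algorithmically testable attracting petals, and producing a uniform bound on the number of iterates needed to trap a given compact subset of a parabolic basin, all with a clean dependence on the supplied data. By comparison, the verifiability of repelling and (super)attracting cycles, the computability and density of the preimages $f^{-m}(\beta)$, and the bookkeeping that assembles the two semi-decision procedures into a single algorithm computing $h(n,z)$ should be routine. The degenerate case in which $J_f$ is the whole sphere requires no separate treatment: the first procedure then fills in every pixel and the second never fires.
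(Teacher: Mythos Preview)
The paper does not prove Theorem~\ref{ThComp}; it is quoted from \cite{BBY07,BY08} as an input to the argument, so there is no in-paper proof to compare against. Your sketch is essentially the Binder--Braverman--Yampolsky argument from those references: upper-semicompute $d(z,J_f)$ via the density in $J_f$ of backward orbits of a certified repelling cycle, and lower-semicompute it by trapping forward iterates of small disks in certified absorbing regions for the (super)attracting and parabolic basins, the latter requiring the non-uniform parabolic data; the dovetail of the two semi-decision procedures then computes $h(n,z)$.

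Two small points worth tightening. First, your phrase ``a compact subset of $F_f$ is carried into a finite union of these traps after a uniformly bounded number of iterates'' is true but the bound is not known to the machine; in the actual algorithm this is handled by dovetailing over the iterate count $k$, the enclosure accuracy, and the search for attracting cycles, rather than by an a priori bound. Second, you need not locate \emph{all} attracting cycles in advance: it suffices to enumerate candidate periodic points, certify the attracting ones as they appear, and fold each newly certified trap into the ongoing dovetail. With these bookkeeping clarifications your outline matches the cited proof.
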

We also use the following result (see  \cite{Przytycki-93}):
\begin{Prop}\label{PropCritRec} For any rational map $f$ there exists $\mu>0$ such that for any critical point $c$ in $J_f$ and any $n\in\mathbb N$ one has $|f^n(c)-c|\geqslant \mu^n$.
\end{Prop}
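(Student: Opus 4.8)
The plan is to argue by contradiction, using that a super-exponentially fast return of the critical orbit to a critical point $c\in J_f$ would force $f$ to possess an attracting periodic point extremely close to $c$, and hence would place $c$ in the Fatou set. I would begin with two reductions. A rational map has only finitely many critical points, so it suffices to produce a constant $\mu_c>0$ for each critical point $c\in J_f$ and set $\mu=\min_c\mu_c$; fix such a $c$. Since a periodic critical point is super-attracting and hence lies in $F_f$, our $c$ is not periodic, so $f^n(c)\ne c$ for every $n\ge 1$; consequently, for any threshold $N_1$ the number $\delta_0:=\min_{1\le n\le N_1}|f^n(c)-c|$ is positive, and $|f^n(c)-c|\ge\mu^n$ holds for $1\le n\le N_1$ as soon as $\mu\le\delta_0$. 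Thus only large $n$ require an argument.

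The single quantitative ingredient I would record is an at-most-exponential bound on the nonlinearity of the iterates near $c$: working in a fixed local coordinate at $c$, there exist $\rho_0>0$ and $M_2=M_2(f)>1$ with $|(f^n)''|\le M_2^n$ on $B(c,\rho_0)$ for all $n$ (routine, since $f$ is a fixed smooth self-map of $\hat\mc$ and the second derivative of an $n$-fold composition grows at most exponentially). I would also use the spherical Lipschitz bound $\|Df^n\|\le M^n$, $M:=\sup\|Df\|\le M_2$, so that $f^n$ expands spherical diameters by at most $M^n$; for large $n$ this keeps every disk appearing below, together with its $f^n$-image, inside $B(c,\rho_0)$ — in particular $\psi_n$ below is holomorphic on the disk in question. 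Fix any $\mu\in(0,1/M_2)$, to be replaced at the end by $\min(\mu,\delta_0)$ with $N_1$ the threshold past which the ``large $n$'' estimates below hold (shrinking $\mu$ only makes them easier).

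Now the main step. Suppose, for contradiction, that $s:=|f^n(c)-c|<\mu^n$ with $n$ large, and set $\psi_n(z):=f^n(z)-z$. As $(f^n)'(c)=0$, the linearization of $\psi_n$ at $c$ is $\ell(z)=f^n(c)-z$. On $\partial B(c,2s)$ we have $|\ell(z)|\ge 2s-|f^n(c)-c|=s$, while $|\psi_n(z)-\ell(z)|=|f^n(z)-f^n(c)|\le M_2^n(2s)^2=4M_2^ns^2<s$ for $n$ large, since $4(M_2\mu)^n\to 0$. By Rouch\'e's theorem, $\psi_n$ has a zero $p_n\in B(c,2s)$; that is, $f^n(p_n)=p_n$ with $|p_n-c|<2s<2\mu^n$. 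Using $(f^n)'(c)=0$ again, $|(f^n)'(p_n)|\le M_2^n|p_n-c|<2(M_2\mu)^n<1$, so $p_n$ is an attracting fixed point of $g:=f^n$; moreover $|g'|\le 2(M_2\mu)^n+\tfrac12<1$ on $B\bigl(p_n,\tfrac{1}{2M_2^n}\bigr)$, so $g$ maps that disk into itself, whence it lies in the basin of $p_n$ and thus in $F_g=F_f$. But for $n$ large $|c-p_n|<2\mu^n<\tfrac{1}{2M_2^n}$, so $c\in F_f$, contradicting $c\in J_f$. This gives $|f^n(c)-c|\ge\mu^n$ for all large $n$, and with the reduction, for all $n$.

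The only delicate point — the main obstacle, modest though it is — is to establish $M_2$ and to verify that one and the same $\mu<1/M_2$ simultaneously validates the Rouch\'e comparison, makes $p_n$ attracting, and makes $B(p_n,\tfrac{1}{2M_2^n})$ large enough to contain $c$; everything else is Rouch\'e's theorem together with the contraction mapping principle. I note that the argument uses only that $c$ is a critical point lying in $J_f$, and nothing about Collet--Eckmann dynamics.
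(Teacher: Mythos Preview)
The paper does not prove this proposition; it simply cites Przytycki's 1993 paper \cite{Przytycki-93}. Your approach --- if the critical orbit returns super-exponentially fast, locate via Rouch\'e a nearby periodic point that must be attracting, forcing $c$ into the Fatou set --- is essentially Przytycki's argument, and the overall architecture is sound.

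There is, however, one genuine gap. The assertion that $|(f^n)''|\le M_2^n$ on a \emph{fixed} ball $B(c,\rho_0)$ for all $n$ is not justified and is in general false. For a point $z\in B(c,\rho_0)$ lying in the basin of an attracting cycle (say $z$ outside the filled Julia set of a polynomial), the intermediate images $f^j(z)$ escape any compact subset of $\mathbb C$, so $|f'|$ and $|f''|$ along the orbit are unbounded in the Euclidean coordinate and the chain-rule recursion for $(f^n)''$ blows up super-exponentially. The phrase ``routine, since $f$ is a fixed smooth self-map of $\hat\mc$'' hides exactly this issue: smoothness on the sphere gives spherical derivative bounds, not Euclidean ones on a fixed chart.

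The fix is that you never actually need the bound on a fixed $\rho_0$-ball. You only use it on the balls $B(c,2s)$ and $B\bigl(p_n,\tfrac{1}{2M_2^n}\bigr)$. Choose constants so that these radii are $o(L^{-n})$ where $L$ is the spherical Lipschitz constant of $f$; then for every $0\le j\le n$ the image $f^j(\,\cdot\,)$ of either ball has spherical diameter $o(1)$ and is centered near $f^j(c)\in J_f$, hence stays in a fixed compact neighborhood of $J_f\subset\mathbb C$ on which $|f'|$ and $|f''|$ are bounded (poles of $f$ lie in $F_f$ once $\infty\in F_f$). The recursion $|E_n|\le \sup|f''|\cdot|D_{n-1}|^2+\sup|f'|\cdot|E_{n-1}|$ then yields an exponential bound on $(f^n)''$ on precisely those balls, and your Rouch\'e and contraction steps go through unchanged. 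With this correction the proof is complete.
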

Our principal result is the following:
\begin{Th}\label{ThMain}
For each $d\geqslant 2$ there exists an oracle Turing Machine $\mathcal M_d^\phi$ with an oracle for the coefficients of a rational map $f$ of degree $\deg f=d$ satisfying ESC, such that the following holds.
 Given the  non-uniform information:
\begin{itemize}
\item dyadic numbers $\lambda$ and $r$ for which the conditions of Definition \ref{DefESC} hold
 and such that $U_{2r}(J_f)\setminus J_f$ does not contain any critical points of $f$,
\item and a dyadic number $\mu$ which satisfies the statement of Proposition \ref{PropCritRec},
\end{itemize}
$M^\phi$ computes $J_f$ in polynomial time.
\end{Th}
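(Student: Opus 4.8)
\medskip\noindent\textbf{Proof plan.} The plan is to build, for every precision $n$ and every admissible pixel centre $z$, a decision running in time $\mathrm{poly}(n)$ that prints $1$ when $d(z,J_f)\le 2^{-n-2}$ and $0$ when $d(z,J_f)\ge 2^{-n-1}$, by following the $f$-orbit of the tiny disk $\Delta:=\bar D(z,2^{-n-2})$ for $O(n)$ steps. First the set-up. If $J_f=\hat\mc$ the answer is constant, so assume $F_f\neq\emptyset$; since ESC forbids rotation domains and parabolic cycles, $F_f$ is a finite union of (eventually) attracting basins, with at most $2d-2$ attracting cycles, and by Theorem \ref{ThComp} the set $J_f$ is computable (in the ESC case no parabolic side-information is needed). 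Hence I would precompute, once and for all — i.e. in $O(1)$ time as a function of $n$ — a few dyadic objects depending only on $f,\lambda,r,\mu$: a fixed neighbourhood $\mathcal N$ of $J_f$ with $U_{\varepsilon_0}(J_f)\subseteq\mathcal N\subseteq U_{2\varepsilon_0}(J_f)$ for a suitable fixed $\varepsilon_0$ with $8\varepsilon_0<r$; an ``absorbing'' dyadic region $\mathcal U$ lying in the immediate attracting basins, disjoint from $U_{4\varepsilon_0}(J_f)$, with $f(\mathcal U)\subseteq\mathcal U$, together with a constant $M$ such that $f^{M}(\{w:d(w,J_f)\ge\varepsilon_0\})\subseteq\mathcal U$; and the Koenigs/Böttcher data needed to propagate a disk inside $\mathcal U$ in linear time.

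The one additional analytic ingredient I would isolate is an \emph{expansion estimate} extracted from ESC: if $V$ is connected, $V\cap J_f\neq\emptyset$ and $\diam V\ge\lambda^{m}$, then $\diam f^{m}(V)>r$. Indeed, were $\diam f^{m}(V)\le r$, then picking $x\in V\cap J_f$ we would have $f^{m}(V)\subseteq U_r(f^{m}(x))$ with $f^{m}(x)\in J_f$, so $V$ would lie in a single connected component of $f^{-m}(U_r(f^{m}(x)))$, which by Definition \ref{DefESC} has diameter $<\lambda^{m}$ — a contradiction. The dual statement is that if $z,f(z),\dots,f^{N-1}(z)$ all lie in $U_r(J_f)$ then $d(z,J_f)<\lambda^{N-1}$: pull back a ball of radius $r$ about the Julia point nearest to $f^{N-1}(z)$ and use that the component through $z$ meets $J_f$ and has diameter $<\lambda^{N-1}$.

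With these in hand the algorithm is: fix $T=O(n)$ with $\lambda^{T-1}<2^{-n-2}$, iterate $\Delta$ forward maintaining an enclosure of $f^{k}(\Delta)$, and the instant an iterate is seen to lie inside $\mathcal U$, switch to the adapted coordinate, certify its fate in linear time, and \emph{output $0$}; if the iterates stay away from $\mathcal U$ for all $k\le T$, \emph{output $1$}. Output $0$ is always correct, since $f^{k}(\Delta)\subseteq\mathcal U\subseteq F_f$ forces $\Delta\cap J_f=\emptyset$ by full invariance of $J_f$, hence $d(z,J_f)>2^{-n-2}$. Output $1$ is correct in the near case $d(z,J_f)\le 2^{-n-2}$, for then every $f^{k}(\Delta)$ contains a Julia point and so never meets $\mathcal U$ (and, by the expansion estimate, $\diam f^{T}(\Delta)>r$, so there is genuine content here too — but correctness only needs the orbit to avoid $\mathcal U$). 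The whole difficulty is therefore the far case $d(z,J_f)\ge 2^{-n-1}$, where $\Delta$ is a compact subset of $F_f$: the dual estimate forces the \emph{centre}'s orbit to leave $U_r(J_f)$ within $O(n)$ steps, after which $f^{k}(\Delta)$ is a compact piece of $F_f$ at definite distance from $J_f$ and lands in $\mathcal U$ within $M$ further steps — \emph{provided} the enclosure we carry has not in the meantime ballooned in size.

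Controlling that enclosure is the crux, and it is exactly where the hypothesis that $U_{2r}(J_f)\setminus J_f$ is critical-point-free and Proposition \ref{PropCritRec} enter. Tracking $f^{k}(\Delta)$ by the naive Lipschitz bound $\diam f^{k}(\Delta)\le\big(\sup_{U_{2r}(J_f)}|f'|\big)^{k}\diam\Delta$ is useless, since ESC only guarantees backward contraction at the possibly much smaller rate $\lambda$, so this bound would reach size $\varepsilon_0$ long before the far orbit has been certified to escape. Instead one must carry $f^{k}(\Delta)$ as an essentially round disk of radius $\sim|(f^{k})'(z)|\cdot 2^{-n-2}$, using Koebe distortion control on a mildly dilated disk $\Delta'$. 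This control is available while the orbit of $\Delta'$ stays in the critical-point-free collar $U_{2r}(J_f)\setminus J_f$, which in the far case it does until the escape into $\mathcal U$; the only critical points it can come close to are those inside $J_f$, and a passage $\varepsilon$-close to such a point $c$ costs a distortion factor $1+O(\eta/\varepsilon)$, where $\eta$ is the current size of the orbit disk. Here Proposition \ref{PropCritRec}, combined with the ESC expansion (which ejects a small set away from $c$ again), forces two $\varepsilon$-close passages to be at least $\gtrsim\log(1/\varepsilon)$ apart, while shadowing of the critical orbit bounds $\varepsilon$ from below; summing the resulting series, the total distortion loss along any orbit of length $O(n)$ is bounded. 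With bounded distortion the ESC expansion gives $|(f^{k})'(z)|\gtrsim\lambda^{-k}$ along $J_f$-shadowing orbits, so the orbit disk grows to the definite size $\varepsilon_0$ — and, being almost round and lying in $F_f$, then sits at distance $\gtrsim\varepsilon_0$ from $J_f$ — within $O(n)$ steps, forcing entry into $\mathcal U$. Since all arithmetic is carried out on the sphere, where $f$ is Lipschitz, and propagating precision through $O(n)$ iterations loses only $O(n)$ bits, the whole decision runs in $\mathrm{poly}(n)$ time. I expect essentially all the work of the proof to lie in this distortion bookkeeping along orbits that recur near critical points in $J_f$.
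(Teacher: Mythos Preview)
Your plan and the paper's share the skeleton --- iterate $z$ for $O(n)$ steps and decide at the moment the orbit exits a fixed collar of $J_f$ --- but your decision mechanism has a gap. You test whether an \emph{enclosure} $E_k$ of $f^k(\Delta)$ lands in the absorbing region $\mathcal U$, and justify the near-case output by ``$f^k(\Delta)$ contains a Julia point, hence is never inside $\mathcal U$.'' That transfers to the algorithm only if $E_k\supseteq f^k(\Delta)$, and the only enclosure you describe that is tight enough to work in the far case is the Koebe disk $D\big(f^k(z),\,C_0\,|(f^k)'(z)|\cdot 2^{-n-2}\big)$, which is a valid over-approximation only when $f^k$ is univalent on the dilated disk $\Delta'$. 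In the near case with $z\in F_f$ and $d(z,J_f)\le 2^{-n-2}$, the orbit of $\Delta'$ may well pass through critical points in $J_f$; univalence fails, and nothing you have written prevents the (now possibly invalid) $E_k$ --- whose center $f^k(z)$ itself escapes to $\mathcal U$ in $O(n)$ steps --- from sitting inside $\mathcal U$ and producing a wrong output $0$. Your distortion bookkeeping is also misplaced: in the far case $\Delta'\subset F_f$, so its orbit never touches a critical point (by the hypothesis that $U_{2r}(J_f)\setminus J_f$ is critical-free), and univalent Koebe on $\Delta\subset\Delta'$ already gives bounded distortion with no passage-counting at all; the place where critical encounters genuinely have to be controlled is precisely the near side, which you do not analyze.

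The paper does not track a disk. It tracks only $p_k\approx f^k(z)$ and $d_k\approx|(f^k)'(z)|$, and at the first exit from $U$ compares $d_k$ to a threshold. The comparison is justified by the two-sided estimate of Proposition~\ref{PropKoebeCE},
\[
\frac{K_1}{|Df^k(z)|}\ \le\ d(z,J_f)\ \le\ \frac{K_2\,C^{\sqrt k}}{|Df^k(z)|},
\]
valid for \emph{every} $z\in U$ with $f^k(z)\in U_r(J_f)\setminus U_\epsilon(J_f)$. The lower bound is Koebe Quarter on a univalent inverse branch. The upper bound --- the real work, and the thing your sketch is reaching for --- is not bounded distortion but carries a sub-exponential loss $C^{\sqrt k}$: it is proved by cutting $\{0,\dots,k\}$ into $O(\sqrt k)$ blocks of lengths $N_0,2N_0,3N_0,\ldots$, showing via Lemma~\ref{LmGenESC} and Proposition~\ref{PropCritRec} that on each block the pullback of a shrinking ball $U_{r_j}(z_{k_j})$ has uniformly bounded \emph{degree}, and then applying the multivalent Koebe lemma (Proposition~\ref{PropKoebeGen}) blockwise. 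The resulting $C^{\sqrt k}$ gap between the two sides is closed at the end by a subpixel refinement.
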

Theorem~\ref{ThMain}, together with Theorems~\ref{ThESC} and~\ref{avmor}, imply the Main Theorem.

Note that if $f$ has no attracting cycles, then Fatou-Sullivan classification implies that $f$ has no Fatou components, and hence $J_f=\hat\mc$, so the proof of Theorem~\ref{ThMain} becomes a triviality in this case. We thus assume that $f$ has at least one attracting cycle in $\hat\mc$, which, in particular, covers the case when $f$ is a polynomial (characterized by $f^{-1}(\infty)=\infty$, so that $\infty$ is a super-attracting point). Moreover, if $\infty \in J_f$ we can construct a dyadic point $a\in\mathbb C$ belonging to some attracting basin of $f$ and consider the map $f_a=h_a^{-1}\circ f\circ h_a$, where $h_a(z)=\frac{1}{z}+a$. Then $\infty\notin J_{f_a}$. Therefore, without loss of generality, we can assume that $\infty\notin J_f$. In view of Proposition~\ref{equiv-metric}, we can thus prove the statement of Theorem~\ref{ThMain} with respect to computability in the Euclidean metric in $\mc$.

As a preparatory step  in the proof of Theorem~\ref{ThMain}, using Theorem~\ref{ThComp} we construct a positive dyadic number $\epsilon<r$ and a dyadic neighborhood $U$ of $J_f$ such that:
\begin{equation}\label{EqUcond}U_{2\epsilon}(J_f)\subset U\;\;\text{and}\;\;f(U_\epsilon(U))\subset U_{r}(J_f).\end{equation}

\section{Proof of Theorem~\ref{ThMain}.}

\subsection{Distortion bounds.}
We will use the classical
Koebe One-Quarter Theorem (see e.g. \cite{Con}):
\begin{Th}\label{Koebe quater} Suppose
$f:U_r(z)\rightarrow \mathbb{C}$ is a univalent function. Then the image
$f(U_r(z))$ contains the disk of radius
${\frac{1}{4}}r|f'(z)|$ centered at
$f(z)$. \end{Th}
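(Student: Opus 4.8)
\emph{Proof strategy.} The plan is to reduce to a normalized univalent map on the unit disk and then extract the constant $\tfrac14$ from the classical area theorem, via the Bieberbach coefficient bound $|a_2|\le 2$.

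First I would normalize. Given a univalent $f\colon U_r(z)\to\mathbb C$, set
\[
F(\zeta)=\frac{f(z+r\zeta)-f(z)}{r\,f'(z)},\qquad \zeta\in\md=U_1(0).
\]
Then $F$ is univalent on $\md$, $F(0)=0$, $F'(0)=1$, and writing $F(\zeta)=\zeta+a_2\zeta^2+a_3\zeta^3+\cdots$, the theorem is equivalent to the assertion that $F(\md)$ contains the open disk $\{|w|<\tfrac14\}$; indeed, undoing the normalization by the affine maps $\zeta\mapsto z+r\zeta$ and $w\mapsto f(z)+rf'(z)\,w$ turns this disk into the disk of radius $\tfrac14 r|f'(z)|$ centered at $f(z)$.

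Next, suppose for contradiction that some $c$ with $0<|c|<\tfrac14$ is omitted by $F$. The M\"obius transformation $T(w)=cw/(c-w)$ is injective on $\mathbb C\setminus\{c\}$ with $T(0)=0$, $T'(0)=1$, so $G=T\circ F$ is univalent on $\md$, and a direct power-series computation gives $G(\zeta)=\zeta+(a_2+c^{-1})\zeta^2+\cdots$. Applying the bound $|a_2|\le 2$ to the second coefficients of both $F$ and $G$ yields $|c^{-1}|\le|a_2|+|a_2+c^{-1}|\le 4$, i.e. $|c|\ge\tfrac14$, a contradiction. Hence every omitted value has modulus at least $\tfrac14$, which is exactly the desired inclusion.

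It remains to prove the Bieberbach bound $|a_2|\le 2$ for a normalized univalent $F$ on $\md$. Here I would use the square-root transform: since $F(\zeta)/\zeta$ is non-vanishing on $\md$ and equals $1$ at $0$, it has a holomorphic square root, so $h(\zeta)=\sqrt{F(\zeta^2)}=\zeta+\tfrac12 a_2\zeta^3+\cdots$ is a well-defined odd holomorphic function; one checks $h$ is univalent on $\md$ (if $h(\zeta_1)=h(\zeta_2)$ then $F(\zeta_1^2)=F(\zeta_2^2)$, so $\zeta_1^2=\zeta_2^2$, and oddness excludes $\zeta_1=-\zeta_2$ unless both vanish). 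Then $\Psi(w)=1/h(1/w)$ is univalent on $\{|w|>1\}$ with Laurent expansion $\Psi(w)=w-\tfrac12 a_2\,w^{-1}+\cdots$, and the area theorem — for univalent $\Psi(w)=w+b_0+b_1w^{-1}+\cdots$ on $\{|w|>1\}$ one has $\sum_{n\ge 1}n|b_n|^2\le 1$, proved by computing the area of the complement of the image via Green's theorem on the curves $\Psi(\{|w|=\rho\})$ and letting $\rho\to 1^+$ — gives $|b_1|\le 1$, hence $|a_2|\le 2$.

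For this classical statement the genuine content is concentrated in two places: the area theorem itself (non-negativity of an area computed by Green's theorem) and the verification that the square-root transform $h$ is univalent. Everything else is formal normalization and power-series bookkeeping, so I would expect the actual write-up to be short once those two points are in place.
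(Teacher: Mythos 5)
The paper does not prove this statement at all---it quotes the classical Koebe One-Quarter Theorem from \cite{Con}---and your argument is the standard, correct textbook proof: normalization to a schlicht function $F$ on $\md$, the omitted-value trick with the M\"obius map $T(w)=cw/(c-w)$, and Bieberbach's bound $|a_2|\le 2$ obtained from the square-root transform and the area theorem. All the computations check out (in particular $T'(0)=1$, the coefficient $a_2+c^{-1}$ of $G$, and the univalence of the odd square-root transform), so this is essentially the same proof the cited reference supplies.
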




\noindent
Recall that the postcritical set of a rational map is defined as the closure of the union of the orbits of its critical points.
First, let us prove the following technical statement:
\begin{Lm}\label{LmGenESC} Assume that a rational map $f$ such that $\infty\notin J_f$ satisfies ESC  and $\lambda,r$ are the corresponding constants (see Definition \ref{DefESC}). Then there is an algorithm which given an oracle for $f$ computes dyadic numbers $\alpha,\beta>0$ such that for any $x\in J_f$, $r>\delta>0$, $n\in\mathbb N$ and any connected component $W$ of $f^{-n}(U_\delta(x))$ one has:
$$\operatorname{diam}(W)\leqslant \alpha\delta^\beta\lambda^n.$$
\end{Lm}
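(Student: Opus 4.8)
The idea is to bootstrap the ESC bound, which controls preimages of a disk of the *fixed* radius $r$, into a bound for preimages of a disk of *arbitrary* small radius $\delta<r$. The key point is that a small disk $U_\delta(x)$ with $x\in J_f$ can be pulled back to $U_r(y)$ for a suitable $y\in J_f$ by a bounded number of forward iterates, and along the way one must control the distortion. Concretely, I would first fix, using the oracle for $f$, a dyadic constant $L>1$ bounding the Lipschitz constant of $f$ on a neighborhood of $J_f$ (recall $\infty\notin J_f$, so $J_f$ is a compact subset of $\mathbb C$ and such an $L$ is computable from the coefficients and from the given $r$). Since $f$ is Lipschitz with constant $L$, for any $x\in J_f$ and $r>\delta>0$ we have $f(U_\delta(x))\subset U_{L\delta}(f(x))$, and $f(x)\in J_f$ by invariance. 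Iterating, $f^k(U_\delta(x))\subset U_{L^k\delta}(f^k(x))$. Choose $k=k(\delta)$ to be the least integer with $L^k\delta\ge r$, so $k\le 1+\frac{\log(r/\delta)}{\log L}$, hence $L^k\delta\in[r,Lr)$; in particular $f^k(U_\delta(x))\subset U_{Lr}(f^k(x))$.

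Now let $W$ be a connected component of $f^{-n}(U_\delta(x))$ and assume first $n\ge k$. Then $f^{n-k}(W)$ is contained in a connected component $W'$ of $f^{-k}\big(f^k(U_\delta(x))\big)\subset f^{-k}(U_{Lr}(f^k(x)))$; more carefully, $W$ is a component of $f^{-(n-k)}$ of a component of $f^{-k}(U_\delta(x))$, and that inner component is contained in a component of $f^{-k}(U_{Lr}(f^k(x)))$. We apply ESC (Definition 1.2) — strictly, after enlarging $r$ to $Lr$ and shrinking $\lambda$ slightly, or by the trivial observation that a component of $f^{-k}(U_{Lr}(y))$ is a finite union, controlled by $\deg f$ and $k$, of small pieces each of which is a component of $f^{-k}(U_r(y'))$ — to get $\operatorname{diam}(W')\le \lambda^{n-k}$ (after absorbing a multiplicative constant depending only on the data). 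Then $\operatorname{diam}(W)\le \lambda^{n-k}$ as well, since $W\subset W'$. Finally substitute the bound on $k$: $\lambda^{n-k}=\lambda^n\lambda^{-k}\le \lambda^n\lambda^{-1-\log(r/\delta)/\log L}=\big(\lambda^{-1}\big)\,\big(\delta/r\big)^{\log(1/\lambda)/\log L}\lambda^n$. Setting $\beta=\log(1/\lambda)/\log L>0$ and $\alpha$ a dyadic upper bound for $\lambda^{-1}r^{-\beta}$ times the constant absorbed above gives $\operatorname{diam}(W)\le \alpha\,\delta^{\beta}\lambda^n$. For the remaining case $n<k$, the component $W$ is contained in a component of $f^{-n}(U_\delta(x))\subset f^{-n}(U_r(x))$ if $\delta<r$ (which is assumed), so ESC directly gives $\operatorname{diam}(W)\le\lambda^n$, and one checks $\lambda^n\le\alpha\delta^\beta\lambda^n$ holds by choosing $\alpha$ large enough relative to $r^{-\beta}$ — actually one must be slightly careful since $\delta$ can be tiny, so here instead one simply uses the crude bound $\operatorname{diam}(W)\le \operatorname{diam}(U_r(x))$-type estimate combined with the fact that $n<k$ forces $\delta > rL^{-k}\ge\dots$; the cleanest route is to note $n<k$ means $L^n\delta<r$, i.e. $\delta^\beta<$ something, and run the same distortion-free argument. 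All constants $L,\lambda,r$ are dyadic or can be replaced by dyadic over/under-estimates, and $\alpha,\beta$ are then obtained by a finite computation, so the whole thing is algorithmic given an oracle for $f$, as required.

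The main obstacle is the bookkeeping in matching "component of $f^{-k}(U_{Lr}(y))$" with the ESC hypothesis, which is stated for preimages of $U_r(\cdot)$ with the specific constant $r$: one needs either (i) to observe that enlarging the radius from $r$ to $Lr$ only costs a bounded number of extra pullback steps (cover $U_{Lr}(y)$ by boundedly many disks of radius $r$ centered at points that may not lie on $J_f$ — here one instead pulls back to radius $r$ centered at a $J_f$-point after one more forward iterate, using that $f(U_{Lr/L}(y))\subset\dots$), or (ii) to prove a cheap "ESC for slightly larger radius" variant directly from ESC plus the Lipschitz bound. Either way the extra cost is additive in $k$ and multiplicative $O(1)$ in the diameter, which does not affect the form $\alpha\delta^\beta\lambda^n$ after renaming constants. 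I expect the cleanest writeup to package this as: "by replacing $\lambda$ with $\lambda^{1/2}$ and $r$ with a smaller dyadic $r'$ if necessary, we may assume $f(U_{r}(J_f))\subset\dots$" type normalization up front, so that the forward-iterate-to-radius-$r$ step lands exactly on the ESC hypothesis without fuss.
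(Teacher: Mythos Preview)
Your overall strategy---grow $U_\delta(x)$ forward under $f$ until it reaches scale $r$, then invoke ESC---is exactly the paper's. But the execution in your second paragraph is tangled and contains a real error. You choose $k$ to be the \emph{least} integer with $L^k\delta\ge r$, overshoot to radius $Lr$, and then try to relate $W$ to a component $W'$ of $f^{-k}(U_{Lr}(f^k(x)))$ via $f^{n-k}(W)\subset W'$. From this you assert $W\subset W'$ (false: it is $f^{n-k}(W)$, not $W$, that lies in $W'$) and $\operatorname{diam}(W')\le\lambda^{n-k}$ (wrong exponent: ESC applied to a $k$-fold preimage yields $\lambda^k$, not $\lambda^{n-k}$). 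Your subsequent algebra then silently flips $(r/\delta)^{\beta}$ to $(\delta/r)^{\beta}$ so that the final answer looks right. The case split $n\ge k$ versus $n<k$ and the whole ``$Lr$ vs.\ $r$'' obstacle you flag are symptoms of having gone in the wrong direction.

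The clean fix, which is precisely what the paper does, is to undershoot rather than overshoot and to push the \emph{target disk} forward rather than pushing $W$ forward. Take $N=\lfloor\log_L(r/\delta)\rfloor$, so $L^N\delta\le r$ and hence $f^N(U_\delta(x))\subset U_r(f^N(x))$ with $f^N(x)\in J_f$ (the intermediate images stay in $U_r(J_f)$, so the Lipschitz bound applies inductively). Then $U_\delta(x)$ itself lies in the component of $f^{-N}(U_r(f^N(x)))$ through $x$, and therefore every component $W$ of $f^{-n}(U_\delta(x))$ lies in a component of $f^{-(n+N)}(U_r(f^N(x)))$. ESC now gives $\operatorname{diam}(W)\le\lambda^{n+N}$ directly---no case split on $n$, no enlarged radius to reconcile. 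Since $N\ge\log_L(r/\delta)-1$, one has $\lambda^{N}\le\lambda^{-1}(\delta/r)^{\beta}$ with $\beta=\log(1/\lambda)/\log L>0$, and the stated bound follows with $\alpha$ a dyadic upper bound for $\lambda^{-1}r^{-\beta}$.
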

\begin{proof} Let $R=\sup\{|Df(z)|:z\in U_r(J_f)\},N=[\log_R \frac{r}{\delta}]$, where $[a]$ stands for the maximal integer less or equal to $a$. Then $f^N(U_\delta(x))\subset U_r(f^N(x))$. The inequality of Lemma \ref{LmGenESC} now follows from applying Definition \ref{DefESC} to $U_r(f^N(x))$.
\end{proof}
We need a generalization of Koebe Distortion Theorem for maps with critical points. From Lemma 2.1 from \cite{PrzytyckiRohde-98} we deduce the following:
\begin{Prop}\label{PropKoebeGen} For each $D\in\mathbb N$ there exists a constant $C>0$ such that the following is true. Let $W\subset \mathbb C$ be a domain and $f:W\to U_1(0)$ be a holomorphic map of degree at most $D$. Then for any $0<t\leqslant\frac{1}{2}$ and any $y\in f^{-1}(0)$ for the component $W'\ni y$ of $f^{-1}(U_t(0))$ one has:
$$\operatorname{diam}(W')\leqslant \frac{Ct}{|f'(y)|}.$$
\end{Prop}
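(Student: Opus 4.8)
The plan is to reduce to the single value $t=\tfrac12$ by rescaling the target disk, and then to invoke \cite[Lemma~2.1]{PrzytyckiRohde-98}, which is precisely a Koebe-type distortion estimate for proper maps of bounded degree. First, the degenerate case: if $f'(y)=0$ the right-hand side is $+\infty$, so we may assume $y$ is not a critical point of $f$. Now fix $t\in(0,\tfrac12]$ and let $\tilde W$ be the connected component of $f^{-1}(U_{2t}(0))$ containing $y$; since $2t\le1$ and $f$ is proper onto $U_1(0)$, the restriction $f|_{\tilde W}$ is proper onto $U_{2t}(0)$ of degree at most $D$. Put $\tilde f=\tfrac1{2t}\,f|_{\tilde W}\colon\tilde W\to U_1(0)$, a proper map of degree $\le D$ with $\tilde f(y)=0$ and $\tilde f'(y)=f'(y)/(2t)$. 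As $\tilde f^{-1}(U_{1/2}(0))=\tilde W\cap f^{-1}(U_t(0))$, the component of $\tilde f^{-1}(U_{1/2}(0))$ through $y$ is again $W'$. Hence it suffices to prove the estimate for $t=\tfrac12$; the general case follows with $C$ replaced by $2C$.

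For $t=\tfrac12$ the estimate is the content of \cite[Lemma~2.1]{PrzytyckiRohde-98} (after, if its formulation differs, a routine translation via the Koebe One-Quarter Theorem). The mechanism is as follows. Since $\overline{U_{1/2}(0)}\Subset U_1(0)$ and $f$ is proper, $\overline{W'}\Subset W$. By Riemann--Hurwitz the number of critical values of $f$ in $U_1(0)$ is at most a constant $N=N(D)$, so a pigeonhole argument produces a round annulus $L=\{R_{\mathrm{in}}<|w|<R_{\mathrm{out}}\}$ with $\tfrac34\le R_{\mathrm{in}}<R_{\mathrm{out}}\le1$, free of critical values of $f$, and of modulus bounded below by a positive constant depending only on $D$. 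Over $L$ the map $f$ is an unbranched covering, so every component of $f^{-1}(L)$ is an annulus of modulus at least $\operatorname{mod}(L)/D$, and one such component $B$ encircles $W'$. Let $\Omega$ be the topological disk bounded by the outer boundary curve of $B$ (a smooth circle, since $R_{\mathrm{out}}$ is a regular value of $|f|$); then $\overline\Omega\Subset W$, $W'\subset\Omega$, and by the maximum principle $f\colon\Omega\to U_{R_{\mathrm{out}}}(0)$ is proper of degree $\le D$. Let $\psi\colon U_1(0)\to\Omega$ be a Riemann map with $\psi(0)=y$; Schwarz's lemma applied to $f\circ\psi\colon U_1(0)\to U_1(0)$ gives $|\psi'(0)|\le 1/|f'(y)|$. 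On the other hand $\psi^{-1}(B)$ is an annulus of definite modulus separating $\psi^{-1}(W')$ from $\partial U_1(0)$, so by the extremal property of the Gr\"otzsch annulus $\psi^{-1}(W')\subset U_{r^*}(0)$ for some $r^*=r^*(D)<1$. The Koebe distortion theorem applied to the univalent map $\psi$ on $U_{r^*}(0)$ then gives $\operatorname{diam}(W')=\operatorname{diam}\bigl(\psi(\psi^{-1}(W'))\bigr)\le C(D)\,|\psi'(0)|\le C(D)/|f'(y)|$.

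The genuinely technical point is the topology of the ``encircling annulus'' --- verifying that some component of $f^{-1}(L)$ really separates $W'$ from the ideal boundary of $W$, and that the level curves $\{|f|=R_{\mathrm{out}}\}$ are smooth Jordan curves. Note that the case in which $f|_{W'}$ is a branched covering of degree $\ge2$ (i.e.\ $W'$ contains critical points of $f$) needs no separate treatment: the degree enters the argument only through the constants $N(D)$, $\operatorname{mod}(L)/D$ and $r^*(D)$. In the write-up I would simply cite \cite[Lemma~2.1]{PrzytyckiRohde-98} and carry out the rescaling reduction above.
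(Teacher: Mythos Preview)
Your proposal is correct and follows the same route as the paper: the paper simply states that the proposition is deduced from \cite[Lemma~2.1]{PrzytyckiRohde-98} and gives no further argument, so your write-up---the rescaling reduction to $t=\tfrac12$ followed by a citation of that lemma---is exactly the intended deduction, only made explicit. The additional sketch of the mechanism behind Lemma~2.1 is accurate and helpful but not needed for the paper, as you yourself note.
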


\begin{Prop}\label{PropKoebeCE} Let $U$, $\epsilon$ be as in (\ref{EqUcond}). There is an algorithm computing dyadic
constants $K_1,K_2,C>0$
 such that for any
$z\in U$ and any $k\in \mathbb{N}$ if $f^k(z)\in U_r(J_f)\setminus
U_\epsilon(J_f)$ then one has $$ \frac{K_1}{|Df^k(z)|}\leqslant
\operatorname{dist}(z,J_f)\leqslant \frac{K_2C^{\sqrt k}}{|Df^k(z)|}.$$ \end{Prop}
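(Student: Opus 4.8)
The inequality is proved by pulling a round disk of definite radius about $w:=f^k(z)$ back along the orbit $z_j:=f^j(z)$, $0\le j\le k$, and controlling the distortion. As a preliminary reduction, using (\ref{EqUcond}) together with the hypothesis that $U_{2r}(J_f)\setminus J_f$ contains no critical points of $f$, one puts oneself in the situation where the whole orbit segment $z_0,\dots,z_k$ stays in $U_r(J_f)$ and the only critical points of $f$ ever approached by it lie in $J_f$ (effectively, $z$ is considered up to the first escape of its orbit from $U_\epsilon(J_f)$). All the constants that occur below — $R:=\sup\{|Df(\zeta)|:\zeta\in U_r(J_f)\}$, the constant $C_0:=C(\deg f)$ of Proposition \ref{PropKoebeGen}, the constants $\alpha,\beta$ of Lemma \ref{LmGenESC}, and $\epsilon,\mu$ — are algorithmically available from the oracle and the non-uniform data, so at the end $K_1,K_2,C$ can be produced by an algorithm.

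\medskip\noindent\emph{Upper bound.} Put $\Delta:=U_{\operatorname{dist}(w,J_f)+\epsilon/4}(w)$, a disk meeting $J_f$, and let $\Omega_j$ be the component of $f^{-(k-j)}(\Delta)$ containing $z_j$. Since $\Omega_0$ contains a preimage of a point of $J_f$, we have $\operatorname{dist}(z,J_f)\le\diam(\Omega_0)$, and it remains to estimate $\diam(\Omega_0)$ one step at a time. At step $j$ the map $f\colon\Omega_j\to\Omega_{j+1}$ has degree $\le\deg f$; rescaling a slightly enlarged preimage disk to $U_1(0)$ and applying Proposition \ref{PropKoebeGen} gives the crude bound $\diam(\Omega_j)\le C_0\,\diam(\Omega_{j+1})/|Df(z_j)|$, which alone would yield only $\diam(\Omega_0)\le C_0^{\,k}\diam(\Delta)/|Df^k(z)|$. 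The content of the proposition is to improve $C_0^{\,k}$ to $C^{\sqrt k}$ (for $k$ larger than a computable constant; smaller $k$ is covered by the crude bound), and this is done by partitioning $\{0,\dots,k-1\}$ into three parts and invoking the crude bound only on two small ones: (a) the last $\lceil c_1\sqrt k\,\rceil$ indices, which contribute at most $C_0^{\,O(\sqrt k)}$; (b) the indices $j$ with $\operatorname{dist}(z_j,\operatorname{Crit}(f))<\mu^{\sqrt k}$, of which there are at most $O(\sqrt k)$ — indeed, if $z_j$ and $z_{j'}$ ($j<j'$) are both $\mu^{\sqrt k}$-close to one $c\in\operatorname{Crit}(f)\cap J_f$, then $|f^{\,j'-j}(c)-c|\le 2R^{\,j'-j}\mu^{\sqrt k}$, whereas Proposition \ref{PropCritRec} gives $|f^{\,j'-j}(c)-c|\ge\mu^{\,j'-j}$, forcing $j'-j\ge\mathrm{const}\cdot\sqrt k$, and $f$ has only finitely many critical points — so these contribute at most $C_0^{\,O(\sqrt k)}$; and (c) the remaining indices, where $\operatorname{dist}(z_j,\operatorname{Crit}(f))\ge\mu^{\sqrt k}$ and $k-j>c_1\sqrt k$. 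For part (c), Lemma \ref{LmGenESC} gives $\diam(\Omega_{j+1})\le\alpha(\operatorname{rad}\Delta)^\beta\lambda^{\,k-j-1}$, and since $|Df(z_j)|\gtrsim\operatorname{dist}(z_j,\operatorname{Crit}(f))^{\deg f-1}$, once $c_1$ is chosen larger than $(\deg f)\log_{1/\lambda}(1/\mu)$ this is far smaller than $\operatorname{dist}(z_j,\operatorname{Crit}(f))\cdot|Df(z_j)|$; hence the relevant inverse branch of $f$ is univalent on a disk about $z_{j+1}$ much larger than $\Omega_{j+1}$, and the classical Koebe theorem (Theorem \ref{Koebe quater}) bounds the step factor by $1+O(\lambda^{\,k-j-1}\mu^{-(\deg f)\sqrt k})$, whose product over all such $j$ is bounded by a constant. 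Multiplying the three contributions gives $\diam(\Omega_0)\le K_2 C^{\sqrt k}/|Df^k(z)|$.

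\medskip\noindent\emph{Lower bound.} Here one runs the same pull-back with the disk $B:=U_{\epsilon/2}(w)$, which is disjoint from $J_f$: the component of $f^{-k}(B)$ containing $z$ is then disjoint from $J_f$, while the closed disk of radius $\operatorname{dist}(z,J_f)$ about $z$ meets $J_f$ and so has $f^k$-image of diameter $\ge\operatorname{dist}(w,J_f)\ge\epsilon$; comparing these two, through the same Proposition \ref{PropKoebeGen}/Lemma \ref{LmGenESC} estimates applied to $f^k$ near $z$, yields $\epsilon\lesssim\operatorname{dist}(z,J_f)\,|Df^k(z)|$, i.e.\ $\operatorname{dist}(z,J_f)\ge K_1/|Df^k(z)|$. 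No sub-exponential correction is needed on this side because every distortion that occurs is favourable: the pull-back contracts, and passing near a critical point only decreases $|Df(z_j)|$.

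\medskip The technical heart — and the step I expect to be the main obstacle — is part (b) of the upper bound: bounding, at the scale $\mu^{\sqrt k}$, how often the orbit of $z$ can come that close to the critical set, which is precisely where Proposition \ref{PropCritRec} enters. One must also dispose of several degenerate configurations ($\Omega_j$ may wrap around a critical point rather than being round; near-critical orbit points are only approximately critical points; $\operatorname{dist}(w,J_f)$ may be comparable to $r$, so that Lemma \ref{LmGenESC} does not apply to $\Delta$ without first slightly shrinking the radii), but these present no essential difficulty.
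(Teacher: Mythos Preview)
Your upper-bound argument is correct and genuinely different from the paper's. The paper partitions $\{0,\dots,k\}$ into $O(\sqrt{k})$ \emph{blocks} of increasing length $N_0,2N_0,3N_0,\dots$, uses Lemma~\ref{LmGenESC} to nest the pullbacks of disks of shrinking radii $r_j=2^{-j}r$, and applies Proposition~\ref{PropCritRec} \emph{inside each block} to bound the number of critical encounters there by a fixed constant; this gives a uniformly bounded degree per block, so Proposition~\ref{PropKoebeGen} contributes one factor $C$ per block. Your scheme instead works step-by-step and sorts indices by distance to the critical set, using Proposition~\ref{PropCritRec} globally to count the ``bad'' indices (b). Both routes arrive at $C^{\sqrt{k}}$; the paper's is a bit cleaner bookkeeping, yours is closer to a direct distortion telescope. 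One small correction: in part~(c) you need the Koebe \emph{Distortion} Theorem, not Theorem~\ref{Koebe quater} (the Quarter Theorem), to get the $1+O(\cdot)$ step factor.

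Your lower-bound argument, however, has a real gap. You want $\diam\bigl(f^k(\overline{D}_z)\bigr)\lesssim |Df^k(z)|\cdot\d(z,J_f)$, but this is an \emph{upper} distortion bound on $f^k$ over a disk that touches $J_f$ and may therefore contain critical points; running ``the same estimates'' there yields at best an extra $C^{\sqrt{k}}$ factor, not a uniform constant. The claim that ``passing near a critical point only decreases $|Df(z_j)|$'' does not help: it makes $|Df^k(z)|$ smaller, hence $K_1/|Df^k(z)|$ larger, which is the wrong direction for the inequality you need. The paper's argument is one line and uses the key observation you are missing: since $\d(w,J_f)\ge\epsilon$, the disk $U_\epsilon(w)$ lies in the Fatou set, and the components of its successive preimages along $z_{k-1},z_{k-2},\dots$ stay (by ESC and \eqref{EqUcond}) inside $U_{2r}(J_f)\setminus J_f$, which by hypothesis contains no critical points. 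Hence the branch of $f^{-k}$ sending $w\mapsto z$ is \emph{univalent} on $U_\epsilon(w)$, and Koebe Quarter gives $\d(z,J_f)\ge \epsilon/(4|Df^k(z)|)$ directly, with no $k$-dependence.
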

\begin{proof} 
\begin{figure}\centering\includegraphics[width=0.7\textwidth]{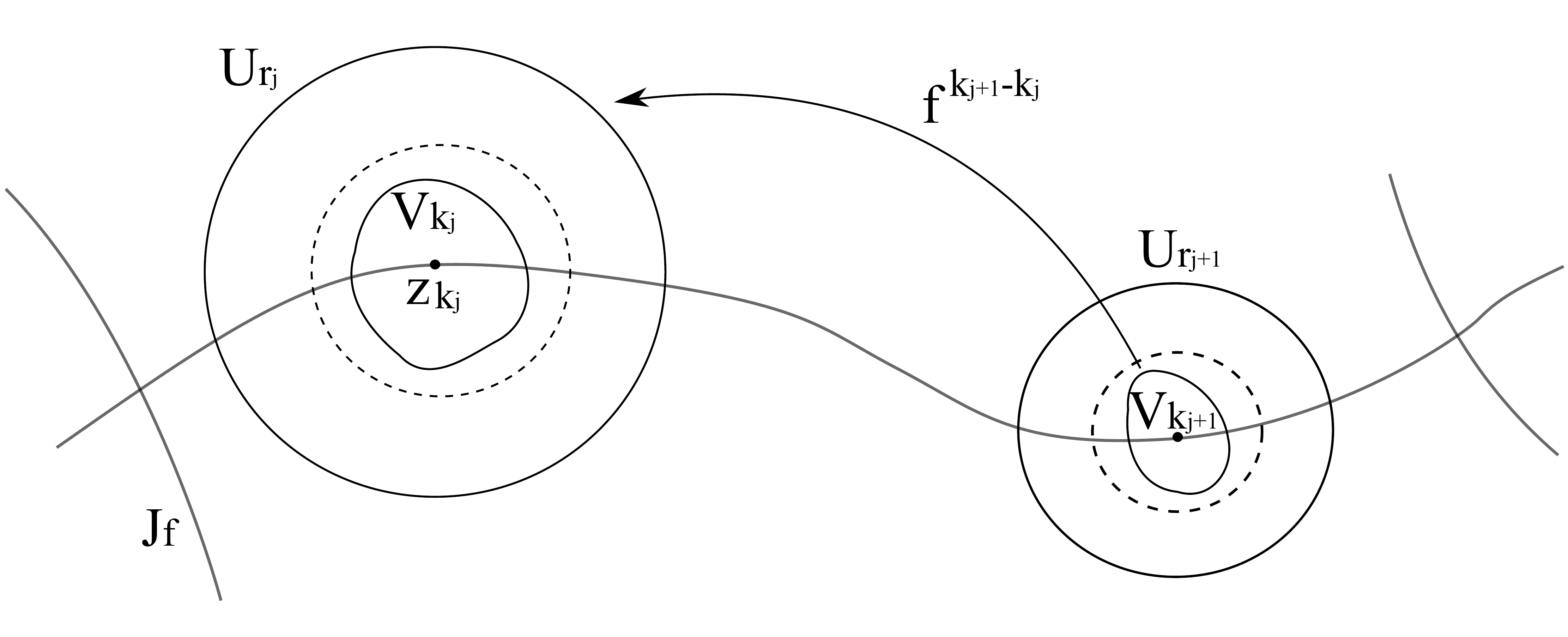}\caption{Sets $V_{k_j}$.} \end{figure}
Let $k,z$ be as in the conditions of Proposition \ref{PropKoebeCE}. Then the disk $U_\epsilon(f^k(z))$ does not intersect the postcritical set of $f$. Applying Koebe Quarter Theorem \ref{Koebe quater} to the inverse branch of $f^{-k}$ in this disk, we obtain $$\d(z,J_f)\geqslant \frac{\epsilon}{4|Df^k(z)|}.$$ 
Set $z_j=f^j(z)$ and $r_j=2^{-j}r$ for all $j\geqslant 0$. Let $W_0$ be the connected component of $f^{-k}(U_r(z_k))$ containing $z$. Set $W_l=f^l(W_0)$ for $0\leqslant l\leqslant k$. Fix the smallest $N_0\in\mathbb N$ such that
$$\lambda^{N_0}<\min\{2^{\beta-1},\tfrac{1}{4}\alpha^{-1}r^{1-\beta}\}.$$ Lemma \ref{LmGenESC} implies that for any $x\in J_f$ and any $j\in\mathbb Z_+$ the connected component of $f^{-N_0(j+1)}(U_{r_j}(f^{N_0(j+1)}(x)))$ containing $x$ has diameter less than
$$\alpha r_j^\beta\lambda^{N_0(j+1)}=\alpha r^\beta\lambda^{N_0}(2^{-\beta}\lambda^{N_0})^j<\tfrac{1}{4}r\cdot 2^{-j}=0.5r_{j+1}$$ and therefore is a subset of $U_{0.5r_{j+1}}(x)$.
 Introduce indexes $$k=k_0>k_1>k_2>\ldots>k_l=0$$ 
such that $$k_{j+1}=k_j-N_0(j+1)\text{ if }k_j\geqslant N_0(j+1)$$ and $k_{j+1}=0$ otherwise. Observe that $l=O(\sqrt k)$.

 Let $0\leqslant j<l$. For $k_{j+1}\leqslant i<k_j$ let $V_i$ be the connected component of $f^{i-k_j}(U_{r_j}(z_{k_{j}}))$ containing $z_{i}$. By the construction of the sequence $k_j$ we have $$V_{k_{j}}\subset U_{0.5r_j}(z_{k_j})\;\;\text{for all}\;\;j<l.$$ Moreover, Lemma \ref{LmGenESC} and definition of $r_j$ imply that there exists $C_1>0,\gamma<1$ depending only on $\alpha,\beta,\lambda$ and $r$ such that for all $k_{j+1}\leqslant i<k_j$ one has $\diam(V_i)<C_1\gamma^j$. Assume that for some indexes $k_{j+1}\leqslant i_1<i_2<k_j$ both $V_{i_1}$ and $V_{i_2}$ contain the same critical point $c$. Since $f^{i_2-i_1}(V_{i_1})=V_{i_2}$ using Proposition \ref{PropCritRec} we obtain that $$\mu^{i_2-i_1}\leqslant |f^{i_2-i_1}(c)-c|<C_1\gamma^j\;\;\text{and so}\;\;i_2-i_1>\log_\mu C_1+j\log_\mu \gamma.$$
It follows that the number of times $V_i$ contains a critical point for $k_{j+1}\leqslant i<k_j$ is bounded by some constant $M=M(C_1,\gamma,\mu,r,\deg f)$. Therefore, the degree of $f^{k_j-k_{j+1}}:V_{k_{j+1}}\to U_{r_j}(z_{k_j})$ is bounded by $N_1=(2\deg f-1)^M$. Since $W_{k_j}\subset U_{0.5r_j}(z_{k_j})$ for all $j<l$ using Proposition \ref{PropKoebeGen} we obtain that
$$\diam (W_{k_{j+1}})\leqslant \frac{C\cdot\diam (W_{k_j})}{|Df^{k_j-k_{j+1}}(z_{k_j})|}$$ for some constant $C$. Taking product of the latter inequality for all $0\leqslant j<l$ we obtain that
$$\diam(W_0)\leqslant \frac{C_2^{\sqrt k}r}{|Df^{k}(z)|},$$ where $C_2$ can be computed given $\lambda,r$ and an oracle for $f$. This finishes the proof.
\end{proof}

\subsection{The algorithm}

 Let $f$ be a rational map satisfying \text{ESC} and $\lambda, r$ be as in Definition \ref{DefESC}. Let $U, \epsilon$ be as in \eqref{EqUcond}. Assume that we would like to verify that a dyadic point $z$ is $2^{-n-1}$ close to $J_f$.
  If $z\notin  U$, we can approximate the distance from $z$ to $J_f$ by $\d(z,U)+r$ up to a constant factor.

Now assume that $z\in U$. Consider the
following subprogram:\\$i:=1$ \\{\bf while} $i\leqslant L(n)=-[(n+1)\log_\lambda 2+1]$
{\bf do}\\ $(1)$ Compute dyadic approximations $$p_i\approx
f^i(z)=f(f^{i-1}(z))\;\;\text{and}\;\;d_i\approx
\left|Df^i(z)\right|=\left|Df^{i-1}(z)\cdot
Df(f^{i-1}(z))\right|$$ \\with precision
$\min\{2^{-n-1},\epsilon\}$.\\ $(2)$ Check the inclusion
$p_i\in U$:\begin{itemize}\item[$\bullet$] if $p_i\in
U$, go to step $(5)$;\item[$\bullet$] if $p_i\notin U$,
proceed to step $(3)$;\end{itemize}
$(3)$ Check the inequality $d_i\geqslant K_2 C^{\sqrt{i}}2^{n+1}+1$. If true, output $0$ and exit the subprogram, otherwise\\
$(4)$ output $1$ and exit subprogram.\\
$(5)$ $i\rightarrow i+1$\\
{\bf end while}\\
$(6)$ Output $0$ end exit.\\
{\bf end}

\medskip

\noindent
The subprogram runs for at most $$L=-[(n+1)\log_\lambda 2+1]=O(n)$$
number of while-cycles each of which consist of a constant number of arithmetic operations with
precision $O(n)$ dyadic bits. Hence the running time of the subprogram can be bounded by $O(n^2\log
n\log\log n)$ using efficient multiplication. \begin{Prop}\label{PropAlgOutput} Let $h(n,z)$ be the
output of the subprogram. Then \begin{equation}h(n,z)=\left\{\begin{array}{ll}
1,&\text{if}\;\;d(z,J_f)>2^{-n-1},\\
0,&\text{if}\;\;d(z,J_f)<K2^{-n-1},\\\text{either}\;0\;\text{or}\;1,&\text{otherwise},\end{array}
\right.\end{equation}where $$K=K(n)=\displaystyle\frac{K_1}{K_2C^{\sqrt{L(n)}}+1},$$ \end{Prop}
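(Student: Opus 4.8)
The plan is to verify the three cases of the displayed formula by tracking what the subprogram does along the orbit $z, f(z), \dots, f^{L(n)}(z)$, and matching its decisions with the distance estimates of Proposition~\ref{PropKoebeCE}. The key observation is that the loop terminates for one of three reasons: (a) some iterate $p_i$ leaves $U$ and the inequality in step $(3)$ holds (output $0$); (b) some iterate $p_i$ leaves $U$ but that inequality fails (output $1$); or (c) all $L(n)$ iterates stay inside $U$, so we reach step $(6)$ (output $0$). I will analyze the three output cases of the proposition by asking which of (a), (b), (c) can produce each output.

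First I would handle the output $1$ case (scenario (b)). Here there is a first index $k\le L(n)$ with $f^k(z)\notin U$, hence $f^k(z)\in U_r(J_f)\setminus U_\epsilon(J_f)$ by \eqref{EqUcond} (all earlier iterates lie in $U$, so the orbit has not yet escaped $U_r(J_f)$); also $f^{k-1}(z)\in U_\epsilon(U)$, so $f^k(z)\in U_r(J_f)$. Thus Proposition~\ref{PropKoebeCE} applies at this $k$, giving $\d(z,J_f)\ge K_1/|Df^k(z)|$. Since the step $(3)$ inequality failed, $d_k < K_2 C^{\sqrt k}2^{n+1}+1$, and with the precision-$2^{-n-1}$ bound on $d_k$ one gets $|Df^k(z)| < K_2 C^{\sqrt k}2^{n+1}+2$ or so; combined with $k\le L(n)$ and monotonicity of $C^{\sqrt{\cdot}}$ this yields $\d(z,J_f) \ge K_1/(K_2 C^{\sqrt{L(n)}}+O(1)) \ge K\cdot 2^{-n-1}$ for the stated $K$. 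Hence output $1$ forces $d(z,J_f)\ge K 2^{-n-1}$, which is the contrapositive of the middle line.

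Next, the output $0$ case. In scenario (a) there is a first escape index $k$ with $f^k(z)\in U_r(J_f)\setminus U_\epsilon(J_f)$ and the step $(3)$ inequality holds: $d_k \ge K_2 C^{\sqrt k}2^{n+1}+1$, so $|Df^k(z)| \ge K_2 C^{\sqrt k}2^{n+1}$ after absorbing the precision error. Proposition~\ref{PropKoebeCE} gives $\d(z,J_f) \le K_2 C^{\sqrt k}/|Df^k(z)| \le 2^{-n-1}$. In scenario (c), all of $z,\dots,f^{L(n)}(z)$ lie in $U$; then invoking ESC (Definition~\ref{DefESC}) via the component $W_0$ of $f^{-L(n)}(U_r(f^{L(n)}(z)))$ containing $z$ — which makes sense because $f^{L(n)}(z)\in U\subset U_r(J_f)$ so $f^{L(n)}(z)$ is within $r$ of some point of $J_f$ — one gets $\d(z,J_f) \le \diam(W_0) < \lambda^{L(n)} \le 2^{-n-1}$ by the choice $L(n) = -[(n+1)\log_\lambda 2 + 1]$. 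Either way, output $0$ implies $d(z,J_f) \le 2^{-n-1}$, which is the contrapositive of the top line. Together the two contrapositives establish the first two lines of the formula, and the ``otherwise'' line is vacuous.

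The main obstacle I anticipate is bookkeeping the relationship between the escape point $f^k(z)$ and the hypotheses of Proposition~\ref{PropKoebeCE}: one must be careful that when $p_i$ (the \emph{approximation}) is detected to leave the dyadic set $U$, the true point $f^i(z)$ indeed lies in $U_r(J_f)\setminus U_\epsilon(J_f)$ and not merely near the boundary of $U$; this is exactly why $U$ is taken with the buffer \eqref{EqUcond} ($U_{2\epsilon}(J_f)\subset U$ and $f(U_\epsilon(U))\subset U_r(J_f)$) and why the approximations $p_i$ are computed to precision $\min\{2^{-n-1},\epsilon\}$. A secondary technical point is tracking the additive $+1$'s and the $2^{-n-1}$ round-off errors through the inequalities so that the constant $K$ comes out exactly as claimed; this is routine but must be done consistently, and it is the reason the definition of $K$ carries the ``$+1$'' in the denominator.
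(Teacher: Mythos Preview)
Your proposal is correct and follows essentially the same three-case decomposition and use of Proposition~\ref{PropKoebeCE} and ESC as the paper's own proof. One small correction in scenario~(c): the inclusion $U\subset U_r(J_f)$ is not part of \eqref{EqUcond}; the paper instead uses $p_{L-1}\in U\Rightarrow f^{L-1}(z)\in U_\epsilon(U)\Rightarrow f^{L}(z)\in U_r(J_f)$ via the second condition in \eqref{EqUcond}, and then applies ESC to $U_r(x)$ for a point $x\in J_f$ near $f^L(z)$ (not to $U_r(f^{L}(z))$).
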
\begin{proof} Suppose first that the subprogram
runs the while-cycle $L$ times and exits at the step $(6)$. This means that $p_i\in U$ for
$i=1,\ldots,L$. In particular, $p_{L-1}\in U$. It follows that $f^{L}(z)\in U_r(J_f)$. By ESC condition we obtain: $$\d(z,J_f)\leqslant \lambda^{-L}\leqslant 2^{-n-1}.$$ Thus if $d(z,J_f)>
2^{-n-1}$, then the subprogram exits at a step other than $(6)$.

Now assume that for some $i\leqslant L$ the subprogram falls into the step $(3)$. Then $$p_{i-1}\in
U\;\; \text{and}\;\; p_i\notin U.$$ Conditions \eqref{EqUcond} imply that $f^i(z)\in U_r(J_f)\setminus U_\epsilon(J_f)$. Now, if $$d_i\ge
K_2C^{\sqrt{i}} 2^{n+1}+1\text{, then }|Df^i(z)|\geqslant K_2C^{\sqrt{i}}2^{n+1}.$$ By Proposition \ref{PropKoebeCE}, $$d(z,J_f)\leqslant
2^{-n-1}.$$ Otherwise, when the algorithm reaches step (4),
$$|Df^i(z)|\leqslant K_2C^{\sqrt{i}}2^{n+1}+2\leqslant (K_2C^{\sqrt{i}}+1)2^{n+1}.$$ In this case Proposition
\ref{PropKoebeCE}
 implies that
 $$d(z,J_f)\geqslant \frac{K_1}{K_2C^{\sqrt{L(n)}}+1}2^{-n-1}.$$
\end{proof}
 Now, to distinguish the case when $d(z,J_f)<2^{-n-1}$ from the case when
$d(z,J_f)>2^{-n}$ we can partition each pixel of size $2^{-n}\times 2^{-n}$ into pixels of size
$(2^{-n}/K)\times (2^{-n}/K)$ and  run the subprogram for the center of each subpixel. This would
increase the running time at most by a factor linear in $n$.

\end{document}